\theoremstyle{definition}
\newtheorem{thm}{Theorem}[section]
\newtheorem{defn}[thm]{Definition}
\newtheorem{lem}[thm]{Lemma}
\newtheorem{prop}[thm]{Proposition}
\newtheorem{cor}[thm]{Corollary}
\newtheorem{rem}[thm]{Remark}
\newtheorem*{defn*}{Definition}
\newtheorem*{thm*}{Theorem}
\newtheorem*{cor*}{Corollary}
\newtheorem*{prp*}{Proposition}
\newtheorem{problem}{Problem}
\newtheorem*{problem*}{Problem}
\newtheorem*{ntt}{Notation}
\newtheorem{thmA}{Theorem}
\newtheorem{problemA}[thmA]{Problem}
\newcommand{\al}{\alpha}
\newcommand{\la}{\lambda}
\newcommand{\e}{\varepsilon}
\newcommand{\N}{\mathbb{N}}
\newcommand{\X}{\mathfrak{X}^{1/2}_{0,1}}
\newcommand{\W}{W_{0,1}}
\newcommand{\iii}[1]{{\left\vert\kern-0.25ex\left\vert\kern-0.25ex\left\vert #1 
    \right\vert\kern-0.25ex\right\vert\kern-0.25ex\right\vert}}
\newcommand\restr[2]{{
  \left.\kern-\nulldelimiterspace 
  #1 
  \vphantom{\big|} 
  \right|_{#2} 
  }}
\DeclareMathOperator{\supp}{supp}
\DeclareMathOperator{\ran}{ran}
\long\def\symbolfootnote[#1]#2{\begingroup%
\def\thefootnote{\fnsymbol{footnote}}\footnote[#1]{#2}\endgroup}
\begin{document}

\title[asymptotically symmetric without spreading models]{Asymptotically symmetric spaces with hereditarily non-unique spreading models}

\author[D. Kutzarova]{Denka Kutzarova}
\address{Department of Mathematics University of Illinois at Urbana-Champaign Urbana, IL 61801, USA and Institute of Mathematics and Informatics, Bulgarian
Academy of Sciences, Sofia, Bulgaria.}
\email{denka@illinois.edu}

\author[P. Motakis]{Pavlos Motakis}
\address{Department of Mathematics, University of Illinois at Urbana-Champaign, Urbana, IL 61801, U.S.A.}
\email{pmotakis@illinois.edu}

\thanks{{\em 2010 Mathematics Subject Classification:} Primary 46B03, 46B06, 46B25, 46B45.}
\thanks{The second named author
  was  supported by the National Science Foundation under Grant Numbers
  DMS-1600600 and DMS-1912897.}

\begin{abstract}
We examine a variant of a Banach space $\mathfrak{X}_{0,1}$ defined by Argyros, Beanland, and the second named author that has the property that it admits precisely two spreading models in every infinite dimensional subspace. We prove that this space is asymptotically symmetric and thus it provides a negative answer to a problem of Junge, the first. named author, and Odell.
\end{abstract}

\maketitle

\section{Introduction}
The notion of an asymptotically symmetric Banach space was introduced in \cite{JKO}. A Banach space $X$ is asymptotically symmetric if the asymptotic behavior of arrays of bounded sequences in $X$ behaves well under permutations in the following way: there exists $C\geq 1$ so that if $(x_j^{(1)})_j,\ldots,(x_j^{(n)})_j$, are bounded sequences in $X$ and $\sigma$ is a permutation of $\{1,\ldots,n\}$ then whenever the iterated limits
\[L_1 = \lim_{j_1\to\infty}\cdots\lim_{j_n\to\infty}\Bigg\|\sum_{i=1}^nx^{(i)}_{j_i}\Bigg\| \text{ and } L_2 = \lim_{j_1\to\infty}\cdots\lim_{j_n\to\infty}\Bigg\|\sum_{i=1}^nx^{(\sigma(i))}_{j_i}\Bigg\|\]
both exist then $L_1\leq CL_2$. This definition is isomorphic and it is a relaxation of the notion of stable spaces from \cite{KM}, in which $L_1 = L_2$. As it was observed in \cite{JKO}, this is indeed a relaxation of stability: Tsirelson space from \cite{T} is asymptotically symmetric but does not admit an equivalent stable norm. This is because stable spaces must always contain a subspace $X$ isomorphic to some $\ell_p$, $1\leq p<\infty$ (see \cite{KM}) and the space $T$ is an asymptotic-$\ell_1$ space that contains no such subspace $X$. Naturally one may wonder whether asymptotically symmetric spaces must have subspaces that are asymptotic-$\ell_p$ spaces. 

\begin{problemA}[\cite{JKO}]
\label{main question}
Let $X$ be an asymptotically symmetric Banach space. Does $X$ contain an infinite dimensional asymptotic-$\ell_p$ or asymptotic-$c_0$ subspace?
\end{problemA}

This problem belongs to a general class of questions that ask whether a property concerning the asymptotic behavior of arrays of sequences (or any other structure for that matter) in a Banach space $X$ can provide more information about other aspects of the asymptotic behavior of $X$ (see, e.g., \cite{FOSZ} and \cite{AM3}). The property of being an asymptotic-$\ell_p$ or $c_0$ space concerns the asymptotic behavior of a Banach space $X$ as a whole and not only that of arrays of sequences. It was first introduced in \cite{MT} for Banach spaces with bases and it was later generalized in \cite{MMT} to all Banach spaces. This definition is based on the notion of a two-player game between a player (S) and a player (V). Given a Banach space $X$, for every $n\in\mathbb{N}$, the game can be played in a version consisting of $n$ successive rounds. In each round $1\leq k\leq n$ player (S) first chooses a finite codimensional subspace $Y_k$ of $X$ and player (V) then chooses a norm-one vector $y_k$ in $Y_k$. For $1\leq p\leq \infty$ the space $X$ is called an asymptotic-$\ell_p$  (or asymptotic-$c_0$ if $p=\infty$) space if there exists $C\geq 1$ so that for every $n$-version of the game player (S) has a winning strategy to force the sequence $(y_k)_{k=1}^n$ (which was picked by player (V)) to be $C$--equivalent to the unit vector basis of $\ell_p^n$.

To solve Problem \ref{main question} in the negative direction we consider a slight variation $\X$ of a reflexive Banach space $\mathfrak{X}_{0,1}$ defined in \cite{ABM}. The spaces defined in that paper have hereditarily heterogeneous spreading model structure. Recall, if $(x_j)_j$ is a sequence in a Banach space and $(e_i)_i$ is a sequence in a seminormed space we say that $(x_j)_j$ generates $(e_i)_i$ as a spreading model if for every $n\in\N$ and scalars $a_1,\ldots,a_n$
\[\lim_{j_1\to\infty}\cdots\lim_{j_n\to\infty}\Bigg\|\sum_{i=1}^na_ix_{j_i}\Bigg\| = \Bigg\|\sum_{i=1}^na_ie_i\Bigg\|.\]
The above definition is from \cite{BS}. It is almost evident that if  $X$ is an asymptotic-$\ell_p$ (or asymptotic-$c_0$) space then every spreading model generated by a weakly null sequence in $X$ must be equivalent to the unit vector basis of $\ell_p$ (or $c_0$). The characterizing property of the space $\mathfrak{X}_{0,1}$ is that all spreading models generated by normalized weakly null sequences in $\mathfrak{X}_{0,1}$ are either equivalent to the unit vector basis of $\ell_1$ or of $c_0$ and both of these sequences appear as spreading models in every subspace of $\mathfrak{X}_{0,1}$. Thus, $\mathfrak{X}_{0,1}$ has no asymptotic-$\ell_p$ or asymptotic-$c_0$ subspace. We slightly modify the definition of the space $\mathfrak{X}_{0,1}$ to obtain a space $\X$ that retains the aforementioned property and it is additionally asymptotically symmetric.

The space $\X$ is defined with a norming set via the method of saturation under constraints with very fast growing averages. This is a Tsirelson-type method that was first used by Odell and Schlumprecht in \cite{OS1} and \cite{OS2}. It was later refined in \cite{ABM}, \cite{AM1} and others. In these papers a central tool in this method was introduced, namely the $\al$-index. This index is assigned to a block sequence in the ambient Banach space and it can obtain either one of two values: zero or not zero. This tool is useful in deciding what spreading model is generated by a given block sequence. We refine this tool by defining the quantified $\al$-index of a block sequence in $\X$. This refinement allows us to provide better estimates that eventually yield that the space $\X$ is asymptotically symmetric. In addition to the above, the quantified $\al$-index allows us to characterize the asymptotic models of the space $\X$. Recall that a infinite array of sequences $(x_j^{(i)})_j$, $i\in\N$, in a Banach space $X$ generates a sequence $(e_i)_i$ in a seminormed space as an asymptotic model if for every $n\in\N$ and scalars $a_1,\ldots,a_n$
\[\lim_{j_1\to\infty}\cdots\lim_{j_n\to\infty}\Bigg\|\sum_{i=1}^na_ix^{(i)}_{j_i}\Bigg\| = \Bigg\|\sum_{i=1}^na_ie_i\Bigg\|.\]
This definition was introduced in \cite{HO}. The definition of asymptotically symmetric spaces can be restated in terms of asymptotic models. A space $X$ is asymptotically symmetric if there exists $C$ so that for any infinite array of normalized sequences $(x_j^{(i)})_j$ in $X$ and every permutation $\sigma$ of $\N$  so that both $(x_j^{(i)})_j$ and $(x_j^{(\sigma(i))})_j$ generate asymptotic models $(e_i)_i$ and $(d_i)_i$ respectively we have that $(d_i)_i$ is $C$-equivalent to $(e_{\sigma(i)})_i$. A similar characterization can be given by using the notion of joint spreading models form \cite{AGLM} instead of asymptotic models. Regarding the asymptotic model structure of $\X$, every asymptotic model generated by an array of weakly null sequences in $\X$ is a sequence of a certain type in the space $c_0\oplus\ell_1$.

At the time that this paper was being prepared another Banach space $X_\mathrm{iw}$ from \cite{AM3}, constructed with the purpose of solving a different question, was observed to be asymptotically symmetric without asymptotic-$\ell_p$ or asymptotic-$c_0$ subspaces. This space solves a question of Odell from \cite{O1}, \cite{O}, and \cite{JKO} as to whether every Banach space that admits a uniformly unique spreading model  must have an asymptotic $\ell_p$ or $c_0$ subspace. The space $X_\mathrm{iw}$ is reflexive and it has the property that every spreading model generated by a normalized weakly null sequence is $4$-equivalent to the unit vector basis of $\ell_1$. Our example additionally demonstrates that asymptotically symmetric spaces can lack homogeneity of spreading models in all subspaces.

In Section \ref{defsec} we introduce the necessary definitions and then we define the space $\X$. In Section \ref{propsec} we prove the properties of the space $\X$, namely that it is asymptotically symmetric and that it does not contain a subspace that is asymptotic-$\ell_p$ or asymptotic-$c_0$. We also classify (up to a constant) all the spreading models and asymptotic models admitted by the subspaces of $\X$. Although some results have been proved elsewhere we include all necessary arguments for the sake of self-containment.

\section{Definition of the space $\X$}
\label{defsec}

We define a small variation of the definition of the space $\mathfrak{X}_{0,1}$ from \cite{ABM}. The difference is that we use a coefficient $1/2$ when defining functionals that result from adding very fast growing sequences of averages in the norming set. This gives us better control for estimating a crucial upper estimate (see Proposition \ref{asymptotic model norm estimate}) that will eventually yield the desired result.

\subsection{Preliminaries} 
For two subsets $A$ and $B$ of $\N$ we say $A < B$ if $\max(A) < \min(B)$. We use the convention $\max(\emptyset)=0$ and $\min(\emptyset) = \infty$. For a Banach space $X$ with a Schauder basis $(x_i)_i$ we define the support of a vector $x = \sum_ia_ix_i$ to be the set $\supp(x) = \{i:a_i\neq 0\}$ and we define the range of $x$ to be the smallest interval of $\N$ containing $\supp(x)$. For a vector $x = \sum_{i}a_ix_i$ with finite support and a set $E\subset\mathbb{N}$ we define $Ex = \sum_{i\in E}a_ix_i$. For two vectors $x$ and $y$ in $X$ we write $x<y$ to mean $\supp(x)<\supp(y)$. A finite or infinite sequence $(y_i)_i$ in $X$ is called a block sequence if for all $i>1$ we have $y_{i-1} < y_i$. The space of all scalar sequences with finitely many non-zero entries is denoted by $c_{00}(\N)$ and its unit vector basis is denoted by $(e_i)_i$. Given two elements $f$ and $x$ of $c_{00}(\N)$ we write $f(x)$ to mean the usual inner product on this vector space.

To define the Banach space $\X$ we will first construct an appropriate subset $\W$ of $c_{00}(\N)$, called a norming set. We then consider a norm $\|\cdot\|$ on $c_{00}(\N)$ given by $\|x\| = \sup\{f(x): f\in\W\}$. The space $\X$ will be the completion of  $(c_{00}(\N),\|\cdot\|)$. The following notions are required to define the set $\W$.
\begin{ntt}
Let $G\subset c_{00}(\N)$.
\begin{itemize}

\item[(i)] A vector $\al_0\in c_{00}(\N)$ will be called an $\al$-average of $G$ if there are $d, n\in\N$, with $d\leq n$, and $f_1<\cdots<f_d$ in $G$ so that $\al_0 = (1/n)(f_1+\cdots+f_d)$. We define the size of this $\al$-average to be $s(\al_0) = n$.

\item[(ii)] A finite sequence $(a_i)_{i=1}^k$ of $\al$-averages of $G$ is called admissible if $a_1<\cdots<a_k$ and $k\leq \min\supp(\al_1)$.

\item[(iii)] A finite (or infinite) sequence $(a_i)_i$ of $\al$-averages of $G$ is called very fast growing if $\al_1<\al_2<\cdots$, $s(\al_1)<s(\al_2)<\cdots$, and $s(\al_i) > \max\supp(\al_{i-1})$ for $i>1$.

\item[(iv)] A vector $f$ in $c_{00}(\N)$ will be called a Schreier functional of $G$ if there is an admissible and very fast growing sequence of $\al$-averages of $G$ $(a_i)_{i=1}^k$ so that $f = (1/2)(a_1+\cdots +a_k)$.

\item[(v)] For every Schreier functional $f\in G$ with $f = (1/2)(\al_1+\cdots +\al_k)$ we define the size of $f$ to be $s(f) = s(\al_1)$ and the length of $f$ to be $\ell(f) = k$. A finite (or infinite) sequence $(f_i)_i$ of $\al$-averages of $G$ is called very fast growing if $f_1<f_2<\cdots$, $s(f_1)<s(f_2)<\cdots$, and $s(f_i) > \max\supp(f_{i-1})$ for $i>1$.

\end{itemize}
\end{ntt}
Although the notions of size and length are not necessarily uniquely defined this causes no problems. The notation introduced in item (v) is not necessary to define the space $\X$ we require it however in the proof of the main result.

\subsection{The space $\X$}
We now define the space $\X$ and give an explicit description of the functionals in the norming set $\W$.
\begin{defn}
We define $\W$ to be the smallest symmetric subset $W$ of $c_{00}(\N)$ that contains the unit vector basis of $c_{00}(\N)$, every $\al$-average of $W$, and every Schreier functional of $W$. We define a norm on $c_{00}(\N)$ given by $\|x\| = \sup\{f(x): f\in \W\}$ and we set $\X$ to be the completion of $(c_{00}(\N), \|\cdot\|)$.
\end{defn}

\begin{rem}
\label{inductive construction}
The set $\W$ can be explicitly described by taking the increasing union of a sequence of sets $(W^m_{0,1})_{m=0}^\infty$ where $W^0_{0,1} = \{\pm e_i: i\in\N\}$ and
 \begin{equation*}
 \begin{split}
W_{0,1}^{m+1} =& W_{0,1}^m\cup\Bigg\{\al_0:~\al_0\text{ is an }\al\text{-average of }W_{0,1}^m\Bigg\}\\
&\cup\Bigg\{f:~f\text{ is a Schreier functional of }W_{0,1}^m\Bigg\}.
 \end{split}
 \end{equation*}
 This description of the norming set is fundamental in proving estimates of functionals on vectors.
\end{rem}

\begin{rem}
One can verify by induction on $\N$ that if $f = \sum_ia_ie_i\in\W$ then for any $E\subset \N$ and choice of signs $(\e_i)_i$ the vectors $Ef$ and $\sum_i\e_ia_ie_i$ are both in $\W$. Hence, for any vector $x = \sum_ib_ie_i$ in $\X$ and choice of signs $(\e_i)_i$ we have $\|\sum_ib_ie_i\| = \|\sum_i\e_ib_ie_i\|$, i.e., the basis $(e_i)_i$ of $\X$ is 1-unconditional.
\end{rem}

\begin{rem}
\label{adding schreier}
Let $(f_i)_{i=1}^k$ be a very fast growing sequence of Schreier functionals so that  $\ell(f_1)+\cdots+\ell(f_k)\leq \min\supp(f_1)$. Then $f = f_1+\cdots+f_k$ is in $\W$. This almost trivial observation is important in this paper. It allows us to quantify the $\al$-index and use it to make the necessary estimates (Proposition \ref{asymptotic model norm estimate}).
\end{rem}

\section{Properties of the space $\X$}
\label{propsec}
In this section we define the quantified $\al$-index and use it a tool to give a more precise description of the spreading models and the asymptotic models of the space $\X$ than was possible with the classical $\al$-index.

\subsection{The quantified $\al$-index}
In the majority of constructions that have been performed with the method of saturation under constraints the $\al$-index has been one of the most important tools for describing the spreading models of the corresponding space. The $\al$-index $\al(x_i)_i$ of a block sequence $(x_i)_i$ can take two possible values: zero and not zero. In this paper we assign to a block sequence $(x_i)_i$ a quantified $\al$-index $\tilde\al(x_i)_i$ which is a non-negative real number. Importantly, $\al(x_i)_i$ is zero if and only if $\tilde\al(x_i)_i$ is zero. The actual value of $\tilde\al(x_i)_i$ gives information regarding the spreading models and the asymptotic models of the space $\X$. Let us first recall the definition of the $\al$-index.

\begin{defn}[Definition 3.1 \cite{ABM}]
Let $(x_i)_i$ be a bounded block sequence in $\X$. We define the $\al$-index of $(x_i)_i$ as follows: if for every sequence of very fast growing average $(a_j)_j$ in $\W$ and every subsequence $(x_{i_j})_j$ of $(x_i)_i$ we have $\lim|\al_j(x_{i_j})| = 0$ then we say $\al(x_i)_i = 0$. Otherwise we say that $\al(x_i)_i > 0$.
\end{defn}

The $\tilde\al$-index is an extension of the above definition.

\begin{defn}
\label{quantal}
Let $(x_i)_i$ be a bounded block sequence in $\X$. We define the quantified $\al$-index of $(x_i)_i$ to be the infimum of all $\theta>0$ that have the following property: for all $N\in\N$ there exist $s_0,i_0\in\N$ so that for all Schreier functionals $f\in\W$ with $s(f)\geq s_0$ and $\ell(f)\leq N$ and for all $i\geq i_0$ we have $|f(x_i)| <\theta$.
\end{defn}

Clearly, $0\leq \tilde\al(x_i)_i\leq \limsup_i\|x_i\|$.  The proof of the following is fairly straight forward and it uses the fact that $\W$ is closed under taking restrictions to intervals of $\N$. We include a description of the argument for completeness.

\begin{prop}
Let $(x_i)_i$ be a bounded block sequence in $\X$. Then $\al(x_i)_i = 0$ if and only if $\tilde \al(x_i) = 0$.
\end{prop}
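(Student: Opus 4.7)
The plan is to prove each implication separately. The forward direction ($\tilde\al(x_i)_i = 0 \Rightarrow \al(x_i)_i = 0$) is essentially immediate: for any $\al$-average $\al_0\in\W$, the functional $f = (1/2)\al_0$ qualifies as a Schreier functional with $\ell(f)=1$ and $s(f)=s(\al_0)$, since the singleton $(\al_0)$ is trivially admissible and very fast growing. Therefore, given a very fast growing sequence of $\al$-averages $(\al_j)_j$ and a subsequence $(x_{i_j})_j$, I would apply the definition of $\tilde\al$-index with $N=1$: for each $\theta>0$ there exist $s_0,i_0$ so that for $j$ large enough $s(\al_j)\geq s_0$ and $i_j\geq i_0$, giving $|\al_j(x_{i_j})|/2 = |f_j(x_{i_j})| < \theta$, hence $\lim|\al_j(x_{i_j})| = 0$.

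For the reverse direction I would argue by contrapositive. Assuming $\tilde\al(x_i)_i > 0$, fix $0 < \theta < \tilde\al(x_i)_i$; then some $N\in\N$ witnesses that $\theta$ fails the defining property, so for every $s_0,i_0$ there exist a Schreier functional $f\in\W$ and an index $i$ with $s(f)\geq s_0$, $\ell(f)\leq N$, $i\geq i_0$, and $|f(x_i)|\geq \theta$. Iterating, I extract sequences $(f_j)_j$ and $(i_j)_j$ with $s(f_j)\to\infty$, $i_j\to\infty$, $\ell(f_j)\leq N$, and $|f_j(x_{i_j})|\geq \theta$ for every $j$.

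I would then apply pigeonhole twice. Passing to a subsequence, assume $\ell(f_j) = k$ is constant and write $f_j = (1/2)(\al_1^{(j)}+\cdots+\al_k^{(j)})$. Since $|f_j(x_{i_j})|\geq \theta$, some component satisfies $|\al_{m_j}^{(j)}(x_{i_j})|\geq 2\theta/k$, and a second pigeonhole lets me assume $m_j = m$ is constant. Now set $\be_j := \ran(x_{i_j})\cdot \al_m^{(j)}$. Because $\W$ is closed under restrictions to intervals and the restriction of an $\al$-average is again an $\al$-average with unchanged size, each $\be_j$ is an $\al$-average with $s(\be_j) = s(\al_m^{(j)})\geq s(\al_1^{(j)}) = s(f_j) \to \infty$ and $|\be_j(x_{i_j})| \geq 2\theta/k$. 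Since $(x_i)_i$ is a block sequence and $i_j$ is strictly increasing, the supports of the $\be_j$ are strictly ordered; after one more subsequence extraction I arrange $s(\be_{j+1}) > \max\supp(\be_j)$, so $(\be_j)_j$ is very fast growing and witnesses $\al(x_i)_i > 0$.

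The main point requiring care is passing from the Schreier functional $f_j$ to a single $\al$-average component while retaining both the quantitative lower bound and the very-fast-growing conditions. This is resolved precisely by restricting each $\al_m^{(j)}$ to the range of $x_{i_j}$: the restriction preserves the size (and the $\al$-average form), so the sizes still tend to infinity, while the supports become automatically block-disjoint, leaving only a routine diagonalization to force the size-versus-support growth condition.
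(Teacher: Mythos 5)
Your proof is correct and takes essentially the same route as the paper: the forward implication by viewing a single $\al$-average $\al_0$ as the Schreier functional $(1/2)\al_0$ with $\ell=1$ and applying the definition with $N=1$, and the reverse implication by pigeonholing a component $\al_m^{(j)}$ of each witnessing Schreier functional (with the same bound $|\al^{(j)}_m(x_{i_j})|\geq 2\theta/k$), restricting it to $\ran(x_{i_j})$ to control supports while preserving size, and diagonalizing to satisfy the very-fast-growing condition. The extra second pigeonhole to fix the index $m$ is harmless but unnecessary, and you spell out the diagonalization that the paper leaves as ``straightforward''; otherwise the two arguments coincide.
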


\begin{proof}
Assume that $\tilde \al(x_i)_i = 0$. For $\theta >0$ apply Definition \ref{quantal} for $N=1$. It easily follows that for any very fast growing sequence of $\al$-averages $(\al_j)_j$ in $\W$ and every subsequence $(x_{i_j})_j$ of $(x_i)_i$ we have $\limsup|\al_j(x_{i_j})| \leq 2\theta$. Assume now that $\tilde\al(x_i)_i > 0$, i.e., there exist $\theta > 0$ and $N_0\in\mathbb{N}$ so that for every $s_0,i_0\in\N$ there are $i\geq i_0$ and a Schreier functional $f\in\W$ with $s(f)\geq s_0$ and $l(f)\leq N_0$ so that $|f(x_i)| \geq \theta$. If we write $f$ in the form $f = (1/2)(\al_1+\cdots +\al_k)$, with $k\leq N_0$ and $s(a_q) \geq s_0$ for $1\leq q\leq k$ then there must be an index $q$ so that $|\al_q(x_i)| \geq 2\theta/N_0$. By restricting the range of $\al_q$ we may assume that $\ran(\al_q)\subset \ran(x_i)$. We have thus shown that for every $s_0,i_0\in\N$ there are $i\geq i_0$ and an $\al$-average $\al_0$ in $\W$ with $s(\al_0) \geq s_0$ and $\ran(\al_0)\subset \ran (x_i)$ so that $|\al_0(x_i)| \geq 2\theta/N_0$. It is now straightforward to find a very fast growing sequence of $\al$-averages $(\al_j)_j$ in $\W$ and a subsequence $(x_{i_j})_j$ of $(x_i)_i$ with $\liminf |\al_j(x_{i_j})| \geq 2\theta/N_0$.
\end{proof}

\subsection{Arrays of sequences in $\X$}
The following Proposition provides the main estimate of this paper. It is used to derive estimates for asymptotic models in the space, spreading models in the space, and in the end to prove that the space is asymptotically symmetric.
\begin{prop}
\label{asymptotic model norm estimate}
Let $x_0\in\X$ and $(x_j^{(1)})_i,\ldots,(x_j^{(n)})_i$ be bounded block sequences in $\X$. For every $\e>0$ there exist $j_1<j_2<\cdots<j_n$ so that if we set $x_{j_0}^{(0)} = x_0$ and $A = \|\sum_{i=0}^nx^{(i)}_{j_i}\|$ then
\begin{equation*}
\max\Bigg\{\max_{0\leq i\leq n}\|x^{(i)}_{q_i}\|,\sum_{i=1}^n\tilde\al(x^{(i)}_{j_i})\Bigg\}-\varepsilon\leq A\leq 2\max_{0\leq i\leq n}\|x^{(i)}_{q_i}\| + 2\sum_{i=1}^n\tilde\al(x^{(i)}_{j})+ \varepsilon.
\end{equation*}
\end{prop}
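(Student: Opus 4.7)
The plan is to prove matching lower and upper estimates. For the lower bound, the $\max_{0\le i\le n}\|x_{j_i}^{(i)}\|$ piece is immediate from $1$-unconditionality of $(e_i)_i$: I would restrict an $(\e/2)$-norming functional $g\in\W$ of the largest $x_{j_i}^{(i)}$ to $\ran(x_{j_i}^{(i)})$; the restriction is still in $\W$ (since $\W$ is closed under restrictions to intervals) and, evaluated on the full sum, it recovers at least $\max_i\|x_{j_i}^{(i)}\|-\e/2$. For the $\sum_{i=1}^n\tilde\al((x_j^{(i)})_j)$ piece, I pick $\theta_i<\tilde\al((x_j^{(i)})_j)$ with $\sum_{i=1}^n\theta_i\ge\sum_{i=1}^n\tilde\al((x_j^{(i)})_j)-\e/2$. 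Negating Definition \ref{quantal} for each $i$ yields $N_i\in\N$ for which cofinally many Schreier functionals $f_i\in\W$ with $\ell(f_i)\le N_i$ and arbitrarily large size satisfy $|f_i(x_j^{(i)})|\ge\theta_i$ for arbitrarily large $j$. Restricting each such $f_i$ to $\ran(x_{j_i}^{(i)})$ preserves its action, so I can select $j_1<\cdots<j_n$ with matching $f_i$'s inductively so that $x_0<x_{j_1}^{(1)}<\cdots<x_{j_n}^{(n)}$, $(f_i)_{i=1}^n$ is very fast growing, and $\sum_{i=1}^n\ell(f_i)\le\min\supp(f_1)$. By Remark \ref{adding schreier}, $f=\sum_{i=1}^n f_i\in\W$, so $A\ge f(\sum_{i=0}^n x_{j_i}^{(i)})=\sum_{i=1}^n f_i(x_{j_i}^{(i)})\ge\sum_{i=1}^n\theta_i$.

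For the upper bound, after arranging $(x_{j_i}^{(i)})_{i=0}^n$ as a block sequence (achievable by the same sparse choice of $j_i$'s), I need $|f(\sum_i y_i)|\le 2\max_i\|y_i\|+2\sum_{i=1}^n\tilde\al^{(i)}+\e$ for every $f\in\W$ with $\ran(f)\subseteq\ran(\sum_i y_i)$, writing $y_i=x_{j_i}^{(i)}$ and $\tilde\al^{(i)}=\tilde\al((x_j^{(i)})_j)$. I would analyze $f$ via Remark \ref{inductive construction}: the unit-vector case is trivial, and the $\al$-average case reduces to applying the estimate to the inner functionals and using $d/m\le 1$. The decisive case is when $f=(1/2)(\al_1+\cdots+\al_k)$ is a Schreier functional. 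I partition the $\al_q$'s into those whose range is contained in $\ran(y_i)$ for some $i$ (\emph{inner to $i$}) and those crossing a boundary between consecutive $\ran(y_{i-1})$ and $\ran(y_i)$ (\emph{boundary}). By the very fast growing property of $(\al_q)_q$ and sparsity of the $j_i$'s, at most one boundary $\al_q$ sits at each boundary, contributing at most $2\|y_i\|$ to $|f(y_i)|$ from both sides; with the $1/2$ prefactor this produces the $2\max_i\|y_i\|$ term. The inner $\al_q$'s to fixed $i$, together with the $1/2$ prefactor, combine into a Schreier functional $g_i\in\W$ with size at least $s(f)$ and length at most $\ell(f)$; choosing the $j_i$'s sparse enough and invoking Definition \ref{quantal} with parameter $N=\ell(f)$ yields $|g_i(y_i)|\le\tilde\al^{(i)}+\e/n$, giving the $2\sum_i\tilde\al^{(i)}$ contribution once the boundary effects are added in.

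The main obstacle is precisely this upper bound, specifically ensuring it holds uniformly in $f\in\W$ while $\ell(f)$ is not a priori bounded. The resolution rests on the admissibility inequality $\ell(f)\le\min\supp(f)$ combined with the very fast growing property of $(\al_q)_q$, which together control how many $y_i$'s an $\al_q$ of moderate size can interact with; making the gaps between the $y_i$'s large enough renders Definition \ref{quantal} applicable for the relevant length parameters. The factor $1/2$ in the definition of Schreier functionals (the key modification of $\X$ compared to $\mathfrak{X}_{0,1}$) is exactly what produces the sharp constant $2$ in the upper estimate, in place of the larger constant that would otherwise arise.
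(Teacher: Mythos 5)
Your lower bound is essentially the same as the paper's: negate Definition \ref{quantal} to get $\theta_i$-approximating Schreier functionals of bounded length, arrange them into a very fast growing sequence with controlled total length, and invoke Remark \ref{adding schreier}. This part is fine, modulo coordinating it with the upper bound so the same $j_1<\cdots<j_n$ serve both (the paper chooses the sparse subsequences first so that the lower bound holds for \emph{any} increasing choice $q_1<\cdots<q_n$ from them, and then refines the choice for the upper bound; you should make the same coordination explicit).

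The upper bound, however, has a genuine gap in the Schreier functional case. Your \emph{inner vs.\ boundary} partition of the $\al_q$'s does not yield the claimed bound. First, there is no reason why only a single boundary-crossing $\al_q$ should exist overall: a ``boundary'' $\al_q$ straddling the gap between $y_{i-1}$ and $y_i$ must have $\max\supp(\al_q)\ge\min\supp(y_i)$, but a later $\al_{q'}$ can straddle the gap between $y_{i'-1}$ and $y_{i'}$ for some $i'>i$ with arbitrarily large size and support, and the very fast growing property is perfectly compatible with this. So you can have up to $n$ boundary $\al_q$'s, one per boundary. Summing their contributions as you propose gives (after the $1/2$ prefactor) something on the order of $\sum_i\|y_i\|\approx(n+1)\max_i\|y_i\|$, not $2\max_i\|y_i\|$. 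Second, a single $\al$-average is not a Schreier functional, so even if its size is huge you cannot directly apply Definition \ref{quantal} to control $\al_q(y_i)$; the $\tilde\al$-index is defined only via Schreier functionals.

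What you are missing is that the Schreier functional case must itself use the induction hypothesis on the construction of $\W$, and it is precisely the prefactor $1/2$ that makes that induction close. The paper's decomposition is sequential rather than inner-vs-boundary: let $i_0$ be the index such that $f$ starts inside (or just before) $y_{i_0}$, so $\ell(f)\le\min\supp(f)\le\max\supp(y_{i_0})=:N_{i_0}$; let $r_0$ be the largest $r$ with $s(\al_r)\le\min\supp(y_{i_0+1})$. Then by the very fast growing property $\al_1,\ldots,\al_{r_0-1}$ are supported before $y_{i_0+1}$ and hence vanish on $y_i$ for $i>i_0$, the tail $g=(1/2)\sum_{r>r_0}\al_r$ is a Schreier functional with $s(g)>\min\supp(y_{i_0+1})$ and $\ell(g)\le N_{i_0}$, so Definition~\ref{quantal} applies (via the ``important inequality'' built into the choice of the $q_i$'s), and the \emph{single} exceptional term $\al_{r_0}$ is bounded by the \textbf{induction hypothesis}: $|(1/2)\al_{r_0}(\sum_{i>i_0}y_i)|\le\tfrac{1}{2}C$. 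Adding $|f(y_{i_0})|\le\max_i\|y_i\|$, the $g$-contribution $\le\sum_i\tilde\al^{(i)}+\e/2$, and $\tfrac{1}{2}C$ yields exactly $C$. Without feeding $\al_{r_0}$ back into the induction (and without the $1/2$ that shrinks its contribution to $C/2$), the estimate does not close; your proposal identifies the right moving parts --- sparsity, VFG, the admissibility bound $\ell(f)\le\min\supp(f)$, and the role of the $1/2$ --- but lacks the single-exceptional-average inductive step that actually makes the $1/2$ do its work.
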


\begin{proof}
As we are allowed a small error $\e>0$ in our estimate we may assume that $x_0$ is finitely supported. For $1\leq i\leq n$ define $\theta_i = \tilde\al(x_j^{(i)}) -\e/n$. Using the definition of the quantified $\al$-index we can find infinite sets $L_1 = \{j_q^{(1)}:q\in\N\},\ldots,L_n = \{j_q^{(n)}:q\in\N\}$, natural numbers $N_1,\ldots,N_n$, and very fast growing sequences of Schreier functionals $(f_q^{(1)})_q,\ldots,(f_q^{(n)})_q$ so that for $1\leq i\leq n$ and $q\in\N$ we have $\ell(f_q^{(i)})\leq N_i$ and $f_q^{(i)}(x_{j_q}^{(i)}) \geq \theta_i$. We may also naturally assume that $\supp(f_q^{(i)}) \subset \supp(x_{j_q}^{(i)})$ and, by perhaps passing to subsequences, we may assume that for all $q_1<\cdots<q_n$ in $\N$ we have that
\begin{gather*}
\supp(x_0)<\supp(x^{(1)}_{q_1})<\cdots<\supp(x^{(n)}_{q_n})\text{ and }\\N_1+\cdots +N_n\leq\min\supp(x^{(1)}_{q_1}).
\end{gather*}
It follows by Remark \ref{adding schreier} that if we pick any $q_1<\cdots<q_n$  then we have that $f = f_{q_1}^{(1)} + \cdots +f_{q_n}^{(n)}$ is in $\W$ and
\begin{equation*}
f\Big(x_0 + \sum_{i=1}^n x^{(i)}_{j_{q_i}}\Big) \geq \sum_{i=1}^n\tilde\al(x^{(i)}_{j_{q_i}}) - \e.
\end{equation*}
It easily follows that for any such $q_1<\cdots<q_n$ we have
\[\max\Bigg\{\max_{0\leq i\leq n}\|x^{(i)}_{q_i}\|,\sum_{i=1}^n\tilde\al(x^{(i)}_{j_i})\Bigg\}-\varepsilon\leq A.\]

We now set out to find $q_1<\cdots<q_n$ so that the desired upper inequality will be satisfied as well. We will choose $q_1<\cdots<q_n$ so that for $1\leq i\leq n$ and $0\leq i'<i$ so that if $N_{i'} = \max\supp(x^{(i')}_{q_{i'}})$ then for every Schreier functional $f\in\W$ with $w(f)\geq \min\supp(x^{(i'+1)}_{q_{i'+1}})$ and $\ell(f) \leq N_i$ we have
\begin{equation}
\label{important inequality}
\Big|f(x^{(i)}_{q_i})\Big| < \tilde \al(x^{(i)}_j)_j+\frac{\e}{2n}.
\end{equation}
We will use the definition of the quantified $\al$-index. Set $N_0 = \max\supp(x_{q_0}^{(0)})$ and for $1\leq i\leq n$ pick $s^0_i,q^0_i\in\N$ so that for every Schreier functional $f\in\W$ with $s(f)\geq s^0_i$ and $\ell(f)\leq N_0$  for all $q\geq q^0_i$ we have that $|f(x^{(i)}_q)| <  \tilde \al(x^{(i)}_j)_j + \e/(2n)$. Pick $q_1$ with $q_1\geq\max_{1\leq i\leq n}q_i^0$ and $\min\supp(x^{(1)}_{q_1})\geq\max_{1\leq i\leq n}s_i^0$. Define $N_1 = \max\supp(x^{(1)}_{q_1})$ and for $2\leq i\leq n$ pick $s^1_i,q^1_i\in\N$ so that for every Schreier functional $f\in\W$ with $s(f)\geq s^1_i$ and $\ell(f)\leq N_1$  for all $q\geq q^1_i$ we have that $|f(x^{(i)}_q)| <  \tilde \al(x^{(i)}_j)_j + \e/(2n)$. Pick $q_2 > q_1$ with  $q_2 \geq \max_{2\leq i\leq n}q_i^1$ and $\min\supp(x^{(2)}_{q_2})\geq\max_{2\leq i\leq n}s_i^1$. Proceed like so.

Define $C = 2\max_{0\leq i\leq n}\|x^{(i)}_{q_i}\| + 2\sum_{i=1}^n\tilde\al(x^{(i)}_j) + \e$. We will prove by induction on $m\in\N$ that for all $f\in\W^m$ (see Remark \ref{inductive construction}) we have $|f(\sum_{i=0}^nx^{(i)}_{q_i})| \leq C$. This is trivial for the case $m = 0$. Assume now that this conclusion holds for every $f\in\W^m$ and let $f\in\W^{m+1}$. If $f$ is an $\al$-average of $\W^m$ then this follows by convexity. Otherwise $f$ is a Schreier functional of $\W^m$ and it may be written as $f = (1/2)\sum_{r = 1}^d\al_r$ where $(\al_r)_{r=1}^d$ is a very fast growing and admissible sequence of $\al$-averages of $\W^m$. We define
\[i_0 = \min\{0\leq i\leq n:\max\supp(f)\geq\min\supp(x^{(i)}_{q_i})\}\]
and
\[r_0 = \max\{1\leq r\leq d:s(\al_r)\leq \min\supp(x^{(i_0+1)}_{q_{i_0+1}})\}.\]
It follows that if we set $g = (1/2)\sum_{r>r_0}\al_r$ then $g$ is a Schreier functional in $\W$ with $s(g) > \min\supp(x^{(i_0+1)}_{q_{i_0+1}})$ and $\ell(g)\leq N_{i_0}$. That is, for $i>i_0$ and the functional $g$ \eqref{important inequality} is satisfied.

We observe that $\max\supp(\al_{r_0-1})<\min\supp(x^{(i_0+1)}_{q_{i_0+1}})$ which yields:
\begin{align*}
\Big|f\Big(\sum_{i=0}^nx^{(i)}_{q_i}\Big)\Big| &\leq |f(x_{q_{i_0}}^{i_0})| + \Big|\Big(\sum_{i>i_0}x^{(i)}_{q_i}\Big)\Big|\\
&\leq \|x_{q_{i_0}}^{(i_0)}\| + \Big|\frac{1}{2}\al_{r_0}\Big(\sum_{i>i_0}x_{q_i}^{(i)}\Big)\Big| + \Big|g\Big(\sum_{i>i_0}x_{q_i}^{(i)}\Big)\Big|\\
&\leq \max_{0\leq i\leq n}\|x_{q_{i}}^{(i)}\| + \frac{1}{2}C + \sum_{i=1}^n\tilde\al(x^{(i)}_j) + \frac{\e}{2} = \frac{1}{2}C + \frac{1}{2}C = C.
\end{align*}
The proof is complete.
\end{proof}

We can now understand, up to an equivalence constant $4$, all asymptotic models of arrays of weakly null sequences in the space $\X$. In fact, they are all certain sequences in $c_0\oplus\ell_1$.

\begin{cor}
\label{asmod char}
Let $(x_j^{(i)})_j$ be an infinite array of normalized weakly null sequences in $X$ that generate and asymptotic model $(z_i)_i$. Then there exist a sequence of non-negative scalars $(w_i)_i$ so that for any $n\in\N$ and sequence of scalars $(\la_i)_{i=1}^n$ we have
\[ \max\Bigg\{\max_{1\leq i\leq n}|\la_i|, \sum_{i=1}^nw_i|\la_i|\Bigg\}\leq \Bigg\|\sum_{i=1}^n\la_iz_i\Bigg\| \leq 2\max_{1\leq i\leq n}|\la_i| + 2\sum_{i=1}^nw_i|\la_i|.\]
In particular, $(z_i)_i$ is $4$-equivalent to the sequence $(e_i,w_ie_i)_i$ in $(c_0\oplus\ell_1)_\infty$.
\end{cor}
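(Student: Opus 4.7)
The plan is to set $w_i = \tilde\al(x_j^{(i)})_j$ for each $i \in \N$ and derive both inequalities by applying Proposition \ref{asymptotic model norm estimate} to the scaled sequences $(\lambda_i x_j^{(i)})_j$, then pushing the estimates through the iterated limits defining the asymptotic model. After passing to subsequences (which does not affect $(z_i)_i$) and applying a standard small perturbation, I may assume that each $(x_j^{(i)})_j$ is a bounded block sequence, so that Proposition \ref{asymptotic model norm estimate} applies.

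Two elementary scaling properties are needed: $\|\lambda_i x_j^{(i)}\| = |\lambda_i|$ by normalization, and $\tilde\al(\lambda_i x_j^{(i)})_j = |\lambda_i| w_i$. The second follows directly from Definition \ref{quantal}, since multiplying by $\lambda_i$ scales $|f(x_j^{(i)})|$ linearly in $|\lambda_i|$ for every Schreier functional $f$, and the 1-unconditionality of the basis makes the sign of $\lambda_i$ irrelevant. With these in hand, Proposition \ref{asymptotic model norm estimate} applied to $(\lambda_i x_j^{(i)})_j$ with $x_0 = 0$ yields, for every $\varepsilon > 0$, indices $j_1 < \cdots < j_n$ for which
\[\max\Big\{\max_{1\leq i \leq n}|\lambda_i|,\,\sum_{i=1}^n w_i|\lambda_i|\Big\} - \varepsilon \,\leq\, \Big\|\sum_{i=1}^n \lambda_i x_{j_i}^{(i)}\Big\| \,\leq\, 2\max_{1\leq i \leq n}|\lambda_i| + 2\sum_{i=1}^n w_i|\lambda_i| + \varepsilon.\]

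The next step is to transfer these bounds to $\|\sum_i \lambda_i z_i\|$ using the iterated-limit definition of the asymptotic model. The inductive construction of indices in the proof of Proposition \ref{asymptotic model norm estimate} only requires each successive $q_i$ to be taken sufficiently large; interleaving this choice with the thresholds provided by the iterated-limit convergence defining $(z_i)_i$, I can arrange $j_1 < \cdots < j_n$ so that simultaneously the proposition's bounds hold and $\big|\,\|\sum_i \lambda_i x_{j_i}^{(i)}\| - \|\sum_i \lambda_i z_i\|\,\big| < \varepsilon$. Letting $\varepsilon \to 0$ then yields both inequalities of the corollary.

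Finally, the $4$-equivalence with $(e_i, w_i e_i)_i$ in $(c_0\oplus\ell_1)_\infty$ is a routine computation: the norm of $\sum_i \lambda_i(e_i, w_i e_i)$ in this space is exactly $\max\{\max_i|\lambda_i|,\sum_i w_i|\lambda_i|\}$, matching the lower bound, while the upper bound $2\max_i|\lambda_i| + 2\sum_i w_i|\lambda_i|$ is at most four times this quantity via $a + b \leq 2\max\{a,b\}$. The main obstacle I expect is the simultaneous index selection in the previous step, which requires a careful inductive interleaving of the large-index constraints from Proposition \ref{asymptotic model norm estimate} with the tail-threshold conditions coming from iterated-limit convergence; the remainder is scaling, arithmetic, and direct invocation of already-established machinery.
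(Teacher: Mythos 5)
Your proposal is correct and follows essentially the same route as the paper: set $w_i = \tilde\al(x_j^{(i)})_j$, apply Proposition \ref{asymptotic model norm estimate} to the scaled sequences $(\lambda_i x_j^{(i)})_j$ with $x_0 = 0$, and let the iterated limits carry the estimate over to $\|\sum_i \lambda_i z_i\|$. The paper states this in one sentence, whereas you spell out the scaling identity $\tilde\al(\lambda_i x_j^{(i)})_j = |\lambda_i|\,\tilde\al(x_j^{(i)})_j$, the block-sequence reduction, and the interleaving of the index choices from the proposition with the iterated-limit convergence defining the asymptotic model; these are genuine details implicit in the paper's terse argument and you have filled them in correctly.
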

\begin{proof}
Set $x_0 = 0$ and for $i=1,\ldots,n$ define $(x_j^{(i)})_j = (\la_i x^{(i)}_j)_j$ and apply Proposition \ref{asymptotic model norm estimate} to obtain that $w_i = \tilde\al(x^{(i)}_j)$, $i\in\N$ are the desired scalars.
\end{proof}

\subsection{Sequences in $\X$}
The fact that every spreading model generated by a weakly null sequence  in $\X$ is equivalent to either the unit vector basis of $c_0$ or of $\ell_1$ and that every subspace of $\X$ admits both of these spreading models is proved in a nearly identical manner as it was proved in \cite{ABM}. The idea is the following: a sequence $(x_i)_i$ generating a $c_0$ spreading model can be blocked by setting $y_n = \sum_{i\in F_n}x_i$ appropriately so that $(y_n)_n$ generates an $\ell_1$ spreading model. Similarly, a sequence $(x_i)_i$ generating an $\ell_1$ spreading model can be blocked by setting $y_n = (1/\#F_n)\sum_{i\in F_n}x_i$ appropriately so that $(y_n)_n$ generates an $\ell_1$ spreading model. For the sake of self-containment we include the proof.

The following states that that every spreading model of a weakly null sequence in $\X$ is either equivalent to the unit vector basis of $c_0$ or to the unit vector basis of $\ell_1$. This was proved in a slightly different manner in \cite{ABM}. Here the result follows almost immediately from  Proposition \ref{asymptotic model norm estimate}.

\begin{cor}
Let $(x_j)_j$ be a normalized bock sequence in $\X$ and assume that it generates some spreading model $(e_i)_i$. Let $\al = \tilde\al(x_j)$. Then for any $n\in\N$ and scalars $(\la_i)_{i=1}^n$ we have
\[ \max\Bigg\{\max_{1\leq i\leq n}|\la_i|, \al\sum_{i=1}^n|\la_i|\Bigg\}\leq \Bigg\|\sum_{i=1}^n\la_ie_i\Bigg\| \leq 2\max_{1\leq i\leq n}|\la_i| + 2\al\sum_{i=1}^n|\la_i|.\]
In particular, if $\tilde\al(x_i) = 0$ then $(e_i)_i$ is equivalent to the unit vector basis of $c_0$ and otherwise it is equivalent to the unit vector basis of $\ell_1$.
\end{cor}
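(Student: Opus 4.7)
The plan is to derive the corollary as a direct consequence of Proposition \ref{asymptotic model norm estimate} applied to the constant array built from $(x_j)_j$, combined with the positive homogeneity of $\tilde\al$. Fix $n\in\N$ and scalars $(\la_i)_{i=1}^n$. I would set $x_0=0$ and, for $1\leq i\leq n$, define $(x_j^{(i)})_j=(\la_i x_j)_j$; each of these is a bounded block sequence in $\X$.

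The first point to establish is the scaling identity $\tilde\al(\la_i x_j)_j=|\la_i|\al$. This is straightforward from Definition \ref{quantal}: if $|f(x_j)|<\theta$ for every Schreier functional $f$ with $\ell(f)\leq N$ and $s(f)\geq s_0$ whenever $j\geq i_0$, then $|f(\la_i x_j)|<|\la_i|\theta$ on the same range, and the reverse inequality (when $\la_i\neq 0$) is obtained by dividing through by $|\la_i|$; the case $\la_i=0$ is trivial.

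Next, given $\ee>0$, Proposition \ref{asymptotic model norm estimate} produces indices $j_1<\cdots<j_n$ such that
\[
\max\Bigl\{\max_{1\leq i\leq n}|\la_i|,\,\al\sum_{i=1}^n|\la_i|\Bigr\}-\ee \;\leq\; \Bigl\|\sum_{i=1}^n\la_i x_{j_i}\Bigr\| \;\leq\; 2\max_{1\leq i\leq n}|\la_i|+2\al\sum_{i=1}^n|\la_i|+\ee.
\]
Inspecting the iterative construction inside that proof, the $j_i$ may be taken arbitrarily large at each stage; in particular, I can simultaneously insist that they be chosen successively large enough for $\bigl|\bigl\|\sum_{i=1}^n\la_i x_{j_i}\bigr\|-\bigl\|\sum_{i=1}^n\la_i e_i\bigr\|\bigr|<\ee$ to hold, by the very definition of the iterated spreading model limit. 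Replacing the middle quantity accordingly and letting $\ee\to 0$ yields the claimed two-sided estimate on the spreading model norm.

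The ``in particular'' clause then follows by inspection. If $\al=0$, the bounds reduce to $\max_i|\la_i|\leq\|\sum\la_i e_i\|\leq 2\max_i|\la_i|$, showing $2$-equivalence with the $c_0$ unit vector basis. If $\al>0$, the lower bound $\al\sum_i|\la_i|\leq\|\sum\la_i e_i\|$ combines with the trivial upper estimate $\|\sum\la_i e_i\|\leq 2(1+\al)\sum_i|\la_i|$ to give $\ell_1$-equivalence with constant $2(1+\al)/\al$. I do not foresee a substantive obstacle here: Proposition \ref{asymptotic model norm estimate} supplies all the nontrivial analytic content, and the remaining steps are the homogeneity computation for $\tilde\al$ and the routine compatibility between the ``for some $j_1<\cdots<j_n$'' output of the proposition and the ``for every iterated limit'' content of the spreading model definition.
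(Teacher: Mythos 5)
Your proposal is correct and follows essentially the same route as the paper: the paper's proof is the single line ``Set $x_0=0$ and for $i=1,\ldots,n$ define $(x_j^{(i)})_j=(\la_i x_j)_j$ and apply Proposition~\ref{asymptotic model norm estimate}.'' You simply make explicit the two details the paper leaves implicit — the homogeneity $\tilde\al(\la_i x_j)_j=|\la_i|\,\tilde\al(x_j)_j$ and the fact that the indices in Proposition~\ref{asymptotic model norm estimate} can be taken large enough to also approximate the spreading-model limit — and these are filled in correctly.
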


\begin{proof}
Set $x_0 = 0$ and for $i=1,\ldots,n$ define $(x_j^{(i)})_j = (\la x_j)_j$ and apply Proposition \ref{asymptotic model norm estimate}.
\end{proof}

We next intend to prove that both $c_0$ and $\ell_1$ appear as spreading models in every subspace. The following lemma is well known but we include a proof for completeness.

\begin{lem}
\label{average on average}
Let $x_1<\cdots<x_n$ be normalized finitely supported vectors in $\X$. Then for any  $\al$-average $\al_0$ in $\X$ we have that
\[\Big|\al_0\Big(\frac{1}{n}\sum_{i=1}^nx_i\Big) \Big|\leq \frac{1}{s(\al_0)} + \frac{2}{n}.\]
\end{lem}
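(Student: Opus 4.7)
The plan is to expand the $\al$-average explicitly and bound the sum of functional evaluations term by term. Write $\al_0 = (1/m)(f_1 + \cdots + f_d)$ with $m = s(\al_0)$, $d \leq m$, and $f_1 < \cdots < f_d$ in $\W$, so that
\[
\al_0\Big(\frac{1}{n}\sum_{i=1}^n x_i\Big) = \frac{1}{mn}\sum_{j=1}^d f_j\Big(\sum_{i=1}^n x_i\Big).
\]
Since each element of $\W$ is a norming functional, we have $|f_j(v)|\le\|v\|$ for every $j$ and every $v\in\X$.

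For each fixed $j$, I would exploit the block structure to split the $x_i$'s into boundary and interior contributions. The index set $I_j = \{i : \supp(x_i) \cap \supp(f_j) \neq \emptyset\}$ is a consecutive interval $[c_j, d_j]$, and the minimality and maximality of $c_j, d_j$ together with the block ordering of the $x_i$ forces $\supp(x_i) \subset \ran(f_j)$ for every interior index $c_j < i < d_j$ (since for such $i$ we have $\min\supp(x_i) > \max\supp(x_{c_j}) \ge \min\supp(f_j)$ and symmetrically on the right). Thus at most two indices per $j$, namely the boundary ones, can have $x_i$ protruding outside $\ran(f_j)$. Setting $z_j = \sum_{c_j < i < d_j} x_i$ and $I_j^{\mathrm{int}} = \{i : \supp(x_i) \subset \ran(f_j)\}$, I would obtain
\[
\Big|f_j\Big(\sum_{i=1}^n x_i\Big)\Big| \leq |f_j(x_{c_j})| + |f_j(z_j)| + |f_j(x_{d_j})| \leq 2 + \|z_j\| \leq 2 + |I_j^{\mathrm{int}}|,
\]
via $\|f_j\|\le 1$, $\|x_i\|=1$, and the triangle inequality.

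The main step is then an aggregation across $j$. Because $f_1 < \cdots < f_d$ is a block sequence, the ranges $\ran(f_j)$ are pairwise disjoint, and consequently the sets $I_j^{\mathrm{int}}$ are pairwise disjoint subsets of $\{1,\ldots,n\}$. Hence $\sum_j |I_j^{\mathrm{int}}| \leq n$, which together with $d \leq m$ gives
\[
\sum_{j=1}^d \Big|f_j\Big(\sum_{i=1}^n x_i\Big)\Big| \leq 2d + n \leq 2m + n,
\]
and dividing by $mn$ yields exactly the claimed estimate $1/s(\al_0) + 2/n$. I do not anticipate a serious obstacle; the only subtle point is the combinatorial verification that interior indices satisfy $\supp(x_i) \subset \ran(f_j)$, which is immediate from the definitions of $c_j, d_j$. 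The conceptual crux is that although each individual $|f_j(z_j)|$ is controlled only by $|I_j^{\mathrm{int}}|$, the disjointness of the ranges $\ran(f_j)$ forces these interior counts to sum to at most $n$, which is precisely what prevents cumulative loss and produces the $2/n$ summand instead of something larger.
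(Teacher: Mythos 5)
Your proof is correct, and it is the dual of the paper's argument. The paper fixes each $x_i$ and counts the set $F_i$ of indices $j$ with $\ran(x_i)\cap\ran(f_j)\neq\emptyset$, splitting into $i$'s that meet at most one $f_j$ (each contributing at most $1/s(\al_0)$) and the rest (whose counts $\#F_i$ sum to at most $2k$ because consecutive $F_i$'s overlap in at most one endpoint). You instead fix each $f_j$ and count the set of $i$'s it meets, splitting into at most two boundary terms per $j$ (total $\le 2d$) and interior terms $\supp(x_i)\subset\ran(f_j)$, which aggregate to at most $n$ by disjointness of the ranges $\ran(f_j)$. Both organizations exploit exactly the same combinatorial fact — that two interleaved block sequences produce only $O(d+n)$ cross-overlaps — and they yield the identical bound $(2d+n)/(mn)\leq 2/n+1/s(\al_0)$; neither is simpler, they just swap the roles of inner and outer index. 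One small imprecision: $I_j=\{i:\supp(x_i)\cap\supp(f_j)\neq\emptyset\}$ need not be an interval, since $\supp(f_j)$ can have gaps that a whole $x_i$ falls into; but your argument never uses the interval property, only $c_j=\min I_j$ and $d_j=\max I_j$, and the containment $\supp(x_i)\subset\ran(f_j)$ for $c_j<i<d_j$ holds regardless, so this is cosmetic rather than a gap.
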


\begin{proof}
Let $\al_0 = (1/d)(f_1+\cdots+f_k)$ where $f_1<\cdots<f_k$ are in $\W$ and $k\leq d$. Define $A = \{i: \ran(x_i)\cap\ran(f_j)\neq\emptyset$ for at most one $j\}$. Then for $i\in A$ we have $|\al_0(x_i)| \leq 1/d$. For $i\not\in A$ define the set $F_i = \{j:\ran(x_i)\cap\ran(f_j)\neq\emptyset\}$. It follows that $\max(F_i)\leq \min(F_{i'})$ for all $i<i'\not\in A$ and therefore $\sum_{i\not\in A}\#F_i \leq 2k$. We condlude:
\[\Big|\al_0\Big(\frac{1}{n}\sum_{i=1}^nx_i\Big) \Big|\leq \frac{1}{n}\sum_{i\in A}|\al_0(x_i)| + \frac{1}{n}\sum_{i\not\in A}\frac{\#F_k}{d} \leq \frac{1}{d}+\frac{2}{n}.\]
\end{proof}

\begin{prop}
\label{its all there}
Let $X$ be a block subspace of $\X$. Then there exists a normalized block sequence in $X$ that generates a spreading model equivalent to the unit vector basis of $\ell_1$ and there exists another normalized block sequence in $X$ that generates a spreading model equivalent to the unit vector basis of $c_0$.
\end{prop}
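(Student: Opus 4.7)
The strategy is to start from an arbitrary normalized block sequence $(x_i)_i$ in $X$ which, after passing to a subsequence by Brunel--Sucheston, generates a spreading model. By the preceding corollary this spreading model is equivalent to the unit vector basis of $c_0$ if $\tilde\al(x_i) = 0$ and to that of $\ell_1$ if $\tilde\al(x_i) > 0$. It therefore suffices to produce, in either case, a new normalized block sequence in $X$ generating the opposite type of spreading model; whichever case occurs, both types are then realized inside $X$.

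Suppose first that $\tilde\al(x_i) > 0$, so that $(x_i)_i$ generates $\ell_1$. I would choose successive finite subsets $F_1 < F_2 < \cdots$ of $\N$ with $\#F_n \to \infty$ and set $z_n = (1/\#F_n) \sum_{i \in F_n} x_i$. The $\ell_1$-lower bound of the preceding corollary, combined with spreading-model convergence on appropriately chosen $F_n$, gives $\|z_n\|$ bounded below by a positive constant, while $\|z_n\| \leq 1$ by $1$-unconditionality, so $y_n = z_n/\|z_n\|$ is a normalized block sequence in $X$. Given a Schreier functional $f = (1/2)(\al_1 + \cdots + \al_k) \in \W$ with $s(f) \geq s_0$ and $\ell(f) \leq N$, the strictly increasing sizes in a very fast growing sequence force each $s(\al_j) \geq s_0$; Lemma \ref{average on average} applied to the normalized vectors $(x_i)_{i \in F_n}$ then yields $|\al_j(z_n)| \leq 1/s_0 + 2/\#F_n$, whence $|f(z_n)| \leq (N/2)(1/s_0 + 2/\#F_n)$. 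Choosing $s_0$ and $n$ sufficiently large makes this arbitrarily small, so $\tilde\al(y_n) = 0$; passing to a subsequence generating a spreading model, the corollary yields $(y_n)$ generates $c_0$.

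Suppose instead that $\tilde\al(x_i) = 0$, so that $(x_i)_i$ generates $c_0$. I would choose successive $F_1 < F_2 < \cdots$ with $\#F_n \to \infty$ and, thinning using the $c_0$-upper estimate of the corollary, arrange that $z_n = \sum_{i \in F_n} x_i$ satisfies $1 \leq \|z_n\| \leq 3$, the lower bound by $1$-unconditionality. Set $y_n = z_n / \|z_n\|$. For each $i \in F_n$, since $\|x_i\| = 1$ and $\W$ is closed under restriction to intervals, pick $g_i \in \W$ with $\supp(g_i) \subseteq \supp(x_i)$ and $g_i(x_i) \geq 1/2$. Then $\al_0^{(n)} = (1/\#F_n) \sum_{i \in F_n} g_i$ is a legitimate $\al$-average of $\W$ (taking $m = d = \#F_n$ meets the admissibility $d \leq m$) with $s(\al_0^{(n)}) = \#F_n$, and $f_n = (1/2) \al_0^{(n)}$ is a Schreier functional with $\ell(f_n) = 1$ and $s(f_n) = \#F_n$. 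Because $\supp(g_i) \subseteq \supp(x_i)$, the vectors $x_j$ with $j \neq i$ are annihilated by $g_i$, so $f_n(z_n) \geq 1/4$ and hence $|f_n(y_n)| \geq 1/12$. Since $s(f_n) \to \infty$, this exhibits $\tilde\al(y_n) \geq 1/12 > 0$, and passing to a subsequence generating a spreading model the corollary yields $(y_n)$ generates $\ell_1$.

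The main obstacle is the Schreier-functional construction in the $c_0$-to-$\ell_1$ direction: one must identify concrete norming functionals $g_i \in \W$ for each $x_i$ with supports inside those of the $x_i$ and verify that their $\al$-average and the resulting Schreier functional meet the size, length, and admissibility conventions imposed on $\W$, so as to produce, for every $y_n$, a single Schreier functional $f_n$ of bounded length whose size tends to infinity and whose action on $y_n$ is bounded away from zero. This is exactly what the definition of $\tilde\al > 0$ demands, so once the construction is in place the result follows immediately from the preceding corollary.
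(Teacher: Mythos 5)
Your proof is correct and follows essentially the same route as the paper's: start from an arbitrary normalized block sequence generating a spreading model, then block it — normalized averages $(1/\#F_n)\sum_{i\in F_n}x_i$ with Lemma~\ref{average on average} to pass from $\ell_1$ to $c_0$, and sums $\sum_{i\in F_n}x_i$ together with an explicit $\al$-average of local norming functionals to pass from $c_0$ to $\ell_1$. The only differences are cosmetic: you work directly with the quantified index $\tilde\al$ (wrapping the witnessing $\al$-average in a length-one Schreier functional $(1/2)\al_0^{(n)}$), whereas the paper phrases the same facts in terms of positivity of the unquantified $\al$-index; and you use norming functionals with $g_i(x_i)\ge 1/2$ rather than the paper's exact normers $f_i(x_i)=1$, which is a harmless and arguably safer choice.
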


\begin{proof}
Start with an arbitrary normalized block sequence $(x_i)_i$ in $X$ that generates some spreading model $(e_i)_i$. Pick for each $i\in\N$ an $f_i\in\W$ with $f_i(x_i) = 1$ and $\ran(f_i) \subset \ran(x_i)$. Choose successive subsets of the natural numbers $(F_n)_n$ with $\#F_n\to\infty$ and $\#F_n\leq\min(F_n)$. If $(e_i)_i$ is equivalent to the unit vector basis of $c_0$  set $y_n = \sum_{i\in F_n}x_i$ and $\al_n = (1/\#F_n)\sum_{i\in F_n}f_i$. It follows that there is $C>0$ so that $\sup\|y_n\| \leq C$ and for all $n\in\N$ $|\al_n(y_n)|\geq 1$. Thus $(y_n)_n$ is bounded and it has positive $\al$-index, i.e., it has a subsequence generating an $\ell_1$ spreading model. If on the other hand $(e_i)_i$ is equivalent to the unit vector basis of $\ell_1$ set $y_n = (1/\#F_n)\sum_{i\in F_n}x_i$. Then there exists $c>0$ so that $\inf\|y_n\| \geq c$ and by Lemma \ref{average on average} we the $\al$-index of $(y_n)_n$ is zero, i.e., it has a subsequence generating a $c_0$ spreading model.
\end{proof}

Since $\X$ has an unconditional subspace, and by Proposition \ref{its all there} it has no subspace isomorphic to $c_0$ or to $\ell_1$, we conclude the following by James' theorem \cite{J}.
\begin{cor}
The space $\X$ is reflexive.
\end{cor}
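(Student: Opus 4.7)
The approach is to apply James's theorem \cite{J}: a Banach space with an unconditional basis is reflexive if and only if it contains no isomorphic copy of $c_0$ and no isomorphic copy of $\ell_1$. The basis $(e_i)_i$ of $\X$ was already noted to be $1$-unconditional, so it suffices to rule out both $c_0$ and $\ell_1$ subspaces of $\X$.

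First I would argue that $\X$ contains no subspace isomorphic to $c_0$. Supposing it did, the Bessaga--Pe\l czy\'nski selection principle (applicable because $(e_i)_i$ is an unconditional basis of $\X$) produces a normalized block sequence $(y_n)_n$ in $\X$ equivalent to the unit vector basis of $c_0$. Its closed linear span $X$ is then a block subspace of $\X$ isomorphic to $c_0$. Applying Proposition \ref{its all there} to $X$ yields a normalized block sequence inside $X$ whose spreading model is equivalent to the $\ell_1$-basis; but every spreading model generated by a normalized sequence in a space isomorphic to $c_0$ must be equivalent to the $c_0$-basis, a contradiction.

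The case ruling out $\ell_1$ subspaces is entirely symmetric. An isomorphic copy of $\ell_1$ inside $\X$ yields, via Bessaga--Pe\l czy\'nski, a block subspace of $\X$ isomorphic to $\ell_1$, and the $c_0$-spreading model produced by Proposition \ref{its all there} in this block subspace contradicts the fact that every spreading model of a normalized sequence in $\ell_1$ is equivalent to the $\ell_1$-basis. Combining both cases with James's theorem finishes the proof.

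I do not anticipate any genuine obstacle: once Proposition \ref{its all there} is available, the only non-routine ingredient is the passage from an abstract $c_0$- or $\ell_1$-subspace to a block subspace of the same isomorphism type, which is immediate from Bessaga--Pe\l czy\'nski since $\X$ has an unconditional basis.
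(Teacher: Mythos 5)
Your proof is correct and follows essentially the same route as the paper, which also invokes James's theorem for spaces with an unconditional basis together with Proposition \ref{its all there} to rule out $c_0$ and $\ell_1$ subspaces. You have merely filled in the (standard, implicit) step of passing from an abstract $c_0$- or $\ell_1$-subspace to a block subspace of the same isomorphism type so that Proposition \ref{its all there} applies.
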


\begin{rem}
We observed that every asymptotic model generated by an array of weakly null sequences in $\X$ is $4$-equivalent to a sequence of the form $(e_i,w_ie_i)_i$ in $(c_0\oplus\ell_1)_\infty$. A converse of this is also true: in every infinite dimensional subspace $X$ of $\X$ and every sequence $(w_i)_i$ in $[0,1]$ there exists an array of normalized weakly null sequences in $X$ that generate an asymptotic model $20$-equivalent to the sequence $(e_i,w_ie_i)_i$ in $(c_0\oplus\ell_1)_\infty$. The way to achieve this is to take a normalized weakly null sequence $(x_j)_j$ in $X$ that generates a $c_0$ spreading model and a normalized weakly null sequence $(y_n)_n$ that generates an $\ell_1$ spreading model with constant $5/4$. Assuming that for all $j\in\N$ we have $x_j<y_j<x_{j+1}$ define for each $i,j\in\N$ the vector $z^{(i)}_j =\|x_j + w_iy_j\|^{-1} (x_j + w_iy_j)$. It can be seen that the sequences $(z^{(i)}_j)_j$ satisfy $w_i/5\leq\tilde\al(z^{(i)}_j)_j\leq w_i$ and hence by Corollary \ref{asmod char} any asymptotic model generated by a sub-array of $(z^{(i)}_j)_j$, $i\in\N$ must be $20$-equivalent to $(e_i,w_ie_i)_i$ in $(c_0\oplus\ell_1)_\infty$.
\end{rem}

\subsection{Conclusion}
We now put all the pieces together to show that the space is asymptotically symmetric, despite not having a unique spreading model in any subspace.

\begin{thm}
The space $\X$ is asymptotically symmetric.
\end{thm}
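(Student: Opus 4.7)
My plan is to exploit the fact that the upper and lower bounds in Proposition \ref{asymptotic model norm estimate} are symmetric in the index $i$: both $\max_{0\leq i\leq n}\|x^{(i)}_{q_i}\|$ and $\sum_{i=1}^n \tilde\al((x_j^{(i)})_j)$ are invariant under permutations of $\{1,\ldots,n\}$. Thus, if the same bounds can be applied to both the original and permuted arrays, comparing the upper bound for $L_1$ with the lower bound for $L_2$ will give the conclusion with constant $C=4$.

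Let $(x_j^{(i)})_j$, $i=1,\ldots,n$, be bounded sequences in $\X$ and $\sigma$ a permutation of $\{1,\ldots,n\}$ for which both iterated limits $L_1,L_2$ exist; by scaling, assume $\|x_j^{(i)}\|\leq 1$. Using reflexivity of $\X$ I would first pass to a common subsequence so that each $(x_j^{(i)})_j$ converges weakly to some $x^{(i)}\in\X$, which leaves $L_1,L_2$ unchanged. Writing $z_j^{(i)}=x_j^{(i)}-x^{(i)}$ yields weakly null sequences; by a successive application of the Bessaga--Pe\l czy\'nski selection principle with diagonalisation I can pass to a further common subsequence along which each $(z_j^{(i)})_j$ is, up to a norm-$\eta$ perturbation, a block sequence. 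Approximating $v:=\sum_{i=1}^n x^{(i)}$ by a finitely supported $v'$ within $\eta$ completes the reduction to the block setting of Proposition \ref{asymptotic model norm estimate}, at a cost of $O(\eta)$ in each iterated norm.

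Next I would apply Proposition \ref{asymptotic model norm estimate} with $x_0=v'$ and the block sequences $(z_j^{(i)})_j$. Crucially, the indices $q_1<\cdots<q_n$ selected in the proof of that proposition are chosen iteratively, each $q_k$ depending on $q_1,\ldots,q_{k-1}$, which mirrors the staircase structure inherent in the iterated-limit definition; I can therefore choose them to satisfy the conclusion of the proposition \emph{and} to realise $L_1$ to within $\eta$. Setting
\[
M=\max\Big\{\|v'\|,\max_{1\leq i\leq n}\lim_j\|z_j^{(i)}\|\Big\},\quad S=\sum_{i=1}^n \tilde\al\bigl((z_j^{(i)})_j\bigr),
\]
the upper bound yields $L_1\leq 2M+2S+O(\eta)$. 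Running the identical argument with the permuted array $(x_j^{(\sigma(i))})_j$ leaves $M$ and $S$ unchanged, since $v$, the family $\{\lim_j\|z_j^{(i)}\|\}$, and the sum of $\tilde\al$-indices are all invariant under permutations of $\{1,\ldots,n\}$. The lower bound of the proposition, for this permuted setup, then gives $L_2\geq \max\{M,S\}-O(\eta)$. Combining,
\[
L_1\leq 2M+2S\leq 4\max\{M,S\}\leq 4L_2+O(\eta),
\]
and letting $\eta\to 0$ gives $L_1\leq 4L_2$, establishing asymptotic symmetry with $C=4$.

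The main obstacle will be ensuring that the choice of $j_1<\cdots<j_n$ required to activate Proposition \ref{asymptotic model norm estimate} can be made to simultaneously realise the iterated limit $L_1$, and separately, for the permuted array, to realise $L_2$. Here one must verify that the adaptive, staircase-like construction of $q_i$'s in the proof of that proposition is compatible with the staircase selection intrinsic to iterated limits. The reduction from bounded sequences to block sequences via reflexivity, weak-limit subtraction, and Bessaga--Pe\l czy\'nski perturbation is essentially routine but requires careful book-keeping to keep all approximations uniform in $i$ so that the invariance of $M$ and $S$ under $\sigma$ is genuinely preserved.
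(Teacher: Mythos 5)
Your proposal is correct and takes essentially the same approach as the paper: reduce to block sequences by subtracting weak limits (using reflexivity), apply Proposition \ref{asymptotic model norm estimate} with $x_0$ the sum of the weak limits, and observe that both the upper and lower bounds it provides are invariant under permutation of $\{1,\ldots,n\}$, giving $L_1\leq 4L_2$. The concern raised in your final paragraph about the compatibility of the proposition's staircase selection with iterated limits is legitimate but is resolved exactly as you anticipate, since each $q_k$ in that proof is chosen subject only to lower bounds and hence the estimate persists along the full tail needed to recover the iterated limit.
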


\begin{proof}
Let $(x_j^{(i)})_j$, $1\leq i\leq n$ be an array of bounded sequences in $\X$, $\sigma$ be a permutation of $\{1,\ldots,n\}$ and assume that that the limits
\begin{equation*}
A = \lim_{j_1\to\infty}\cdots\lim_{j_n\to\infty}\Big\|\sum_{i=1}^nx^{(i)}_{j_i}\Big\|\text{ and }
B =  \lim_{j_1\to\infty}\cdots\lim_{j_n\to\infty}\Big\|\sum_{i=1}^nx^{(\sigma(i))}_{j_i}\Big\|
\end{equation*}
both exist. By reflexivity and passing to subsequences we may assume that the limits $w$-$\lim_jx_j^{(i)} = x_i$, $1\leq i\leq n$ exist. Define $y_0 = \sum_{i=1}^nx_i$ and $\la_0 = \|y_0\|$. We may also assume that the sequences $(y^{(i)}_j)_j = (x_j^{(i)}-x_i)_j$ are block sequences and that the numbers $\la_i = \lim_j\|y^{(i)}_j\|$ exist for $1\leq i\leq n$. Note that
\[A = \lim_{j_1\to\infty}\cdots\lim_{j_n\to\infty}\Bigg\|y_0 + \sum_{i=1}^ny^{(i)}_{j_i}\Bigg\| \text{ and } B = \lim_{j_1\to\infty}\cdots\lim_{j_n\to\infty}\Bigg\|y_0 + \sum_{i=1}^ny^{(\sigma(i))}_{j_i}\Bigg\|.\]
Proposition \ref{asymptotic model norm estimate} yields that
\[\max\Big\{\max_{0\leq i\leq n}\la_i,\sum_{i=1}^n\tilde\al(y^{(i)}_j)\Big\}\leq A\leq 2\max_{0\leq i\leq n}\la_i + 2\sum_{i=1}^n\tilde\al(y^{(i)}_j)\]
and the exact same estimate for $B$ instead of $A$. This means $A\leq 4B$.
\end{proof}

It was proved in \cite{OS1} that there exist Banach spaces that do not admit an $\ell_p$ or $c_0$ spreading model. Although asymptotically symmetric Banach spaces do not necessarily have a unique spreading model a possible implication of this property could perhaps be the existence of an $\ell_p$ or $c_0$ spreading model. The following can be viewed as necessary modification of Problem \ref{main question}.
\begin{problem}
Does every asymptotically symmetric Banach spaces admit an $\ell_p$ or $c_0$ spreading model?
\end{problem}


\begin{thebibliography}{99}




\bibitem[ABM]{ABM} S.A. Argyros, K. Beanland, and P. Motakis,
{\em Strictly singular operators in Tsirelson like spaces},
Illinois J. Math. 57 (2013), no. 4, 1173-1217.


\bibitem[AGLM]{AGLM}
S. A. Argyros, A. Georgiou, A.-R. Lagos, and P. Motakis,
{\em Joint spreading models and uniform approximation of bounded operators}
arXiv:1712.07638 (2017).








\bibitem[AM1]{AM1}
S. A. Argyros and P. Motakis,
{\em A reflexive hereditarily indecomposable space with the hereditary invariant subspace property},
Proc. London Math. Soc. {\bf 108}, no. 6 (2014), 1381-1416.



\bibitem[AM3]{AM3}
S. A. Argyros and P. Motakis,
{\em On the complete separation of asymptotic structures in Banach spaces},
preprint.





\bibitem[BS]{BS}
A. Brunel and L. Sucheston,
{\em On B-convex Banach spaces},
Math. Systems Theory {\bf 7}, no.4 (1974), 294-299.



\bibitem[FOSZ]{FOSZ}
D. Freeman, E. Odell, B. Sari, B. Zheng,
{\em On spreading sequences and asymptotic structures},
 Trans. Amer. Math. Soc. {\bf 370} (2018), no. 10, 6933-6953.




\bibitem[HO]{HO}
L. Halbeisen and E. Odell,
{\em On asymptotic models in Banach spaces},
Israel J. Math. {\bf 139} (2004), 253-291. 

\bibitem[J]{J}
R. C. james,
{\em. Bases and reflexivity of Banach spaces},
Ann. of Math. (2) {\bf 52} (1950), 518-527.

\bibitem[JKO]{JKO}
M. Junge, D. Kutzarova, and E. Odell,
{\em On asymptotically symmetric Banach spaces},
Studia Math. {\bf 173} (2006), no. 3, 203-231.


\bibitem[KM]{KM} J. L. Krivine and B. Maurey,
{\em Espaces de Banach stables},
Israel J. Math. {\bf 39} (1981), no. 4, 273-295,


\bibitem[MMT]{MMT}
B. Maurey, V. D. Milman, and N. Tomczak-Jaegermann,
{\em Asymptotic infinite-dimensional theory of Banach spaces},
Geometric aspects of functional analysis (Israel, 1992-1994), 149-175, Oper. Theory Adv. Appl., {\bf 77}, Birkh\"auser, Basel, 1995.

\bibitem[MT]{MT}
V. D. Milman and N. Tomczak-Jaegermann,
{\em Asymptotic $\ell_p$ spaces and bounded distortion},
Banach spaces (M\' erida, 1992) Contemp. Math., vol. 144, Amer. Math. Soc., Providence, RI, 1993, pp. 173-195.


\bibitem[O1]{O1}
E. Odell,
{\em Stability in Banach spaces,}
Extracta Math. {\bf 17} (2002), no. 3, 385-425.

\bibitem[O2]{O}
E. Odell,
{\em On the structure of separable infinite dimensional Banach spaces},
Chern institute of mathematics, Nankai university, Tianjin, China, July 2007.

\bibitem[OS1]{OS1}
E. Odell and Th. Schlumprecht, {\em On the richness of the set of $p$'s
in Krivine's theorem}, Operator Theory, Advances and Applications
{\bf 77} (1995), 177-198.

\bibitem[OS2]{OS2} E. Odell and Th. Schlumprecht, {\em A Banach space block finitely universal for monotone bases}, Trans. Amer. Math. Soc. {\bf 352},
no. 4 (2000), 1859-1888.





\bibitem[T]{T}
B. S. Tsirelson,
{\em Not every Banach space contains $\ell_p$ or $c_0$},
Functional Anal. Appl. {\bf 8} (1974), 138-141.

\end{thebibliography}
\end{document}